\def\titlerunning#1{\gdef\titrun{#1}}
\def\author#1{\gdef\autrun{\def\and{\unskip, }#1}\gdef\@author{#1}}
\def\address#1{{\def\and{\\\hspace*{18pt}}\renewcommand{\thefootnote}{}%
\footnote {#1}}%
\markboth{\autrun}{\titrun}}
\def\email#1{e-mail: #1}
\def\subjclass#1{{\renewcommand{\thefootnote}{}%
\footnote{\emph{Mathematics Subject Classification (2010):} #1}}}
\def\keywords#1{\par\medskip
\noindent\textbf{Keywords.} #1}
\newtheorem{theorem}{Theorem}[section]
\newtheorem{lem}[theorem]{Lemma}
\numberwithin{equation}{section}
\newcommand{\R}{{\mathbb R}}
\newcommand{\Z}{{\mathbb Z}}
\newcommand{\e}{\varepsilon}
\newcommand{\supp}{\operatorname{supp}}
\begin{document}

\baselineskip=17pt

\titlerunning{Homogenization of the Dirichlet problem, Pointwise Estimates}

\title{Applications of Fourier analysis in homogenization of Dirichlet problem  \\ I. Pointwise Estimates}

\author{Hayk Aleksanyan
\and
Henrik Shahgholian
\and
Per Sj\"{o}lin}

\date{}

\maketitle

\address{H. Aleksanyan: Department of Mathematics and Mechanics, Yerevan State University, Alex Manoogian 1, Yerevan, Armenia; \email{hayk.aleksanyan@gmail.com}
\and
H. Shahgholian: Department of Mathematics, Royal Institute of Technology,100~44  Stockholm, Sweden; \email{henriksh@kth.se}
\and
P. Sj\"{o}lin: Department of Mathematics, Royal Institute of Technology,100~44  Stockholm, Sweden; \email{pers@math.kth.se}
}

\subjclass{Primary 35B27; Secondary 42B20}


\begin{abstract}
In this paper we prove convergence results for homogenization problem for
solutions of partial differential system with rapidly oscillating Dirichlet data.
Our method is based on analysis of oscillatory integrals. In the uniformly convex and smooth domain, and smooth operator and boundary data, we prove
pointwise convergence results, namely
$$|u_{\e}(x)-u_0 (x)| \leq C_{\kappa} \e^{(d-1)/2}\frac{1}{d(x)^{\kappa} }, \ \forall x\in D, \  \forall \ \kappa>d-1,$$
where $u_{\e}$ and $u_0$ are solutions of respectively oscillating and homogenized Dirichlet problems, and $d(x)$ is the distance of $x$ from the boundary of $D$. As a corollary for all  $1\leq p <\infty$ we obtain $L^p$ convergence rate as well.

\keywords{Elliptic systems, Homogenization, Oscillatory Integrals}
\end{abstract}

\section{Introduction and main results}
Homogenization in partial differential equation is a well studied topic, and with a major impact in applications, in particular in material sciences, where impurities of material tend to spread
all over and affect both qualitative and quantitative analysis of the materials properties. In situations as such, one tries to make an approximate averaging of impure quantities, to reduce the cost of any numerical computations. The averaging and homogenization technique is now a well developed tool, with an abundance of literatures on the topic. Most of these literatures, so far, take on the analysis of the bulk equation, that relates to the heterogeneity of the material;  e.g. highly oscillating flux, turbulences, singularly perturbed equations. We refer to the book of Bensoussan-Lions-Papanicolaou \cite{BLP} for background and overview.

The problem of homogenizing the boundary data for Dirichlet problem has long been a challenging task, and there are very few and incomplete results in this direction. We refer to a recent paper of  D. G\'{e}rard-Varet and N. Masmoudi \cite{GM} for an introduction as well as some background in the topic; see also \cite{LS}. The problem of homogenization of boundary data has shown to be an extremely hard problem, with deep connections to ergodic and number theory, and diophantine approximation!

To explain this to some extent, let us (for simplicity) consider the problem of finding a harmonic function
in a (bounded) domain $D$ in $\R^d$ ($d\geq 2$), with an oscillating boundary data $g ( x/ \e)$, where $g$ is a continuous $1$-periodic function in $\R^d$. Such a solution can be expressed by Poisson representation
\begin{equation}\label{poisson}
u_{\e} (x) = \int_{\partial D} P(x,y)g(y/ \e) d\sigma (y) ,
\end{equation}
where $P(x,y)= n(y) A(y)\nabla_y G(x,y)$, with $n(y)$ the outward unit normal to $\partial D$ at $y$, $G$ the Greens function and $A$ the matrix representing the operator;
see (\ref{problem-formulation})-(\ref{matrix}). For the Laplacian/harmonic case $A$ is the identity matrix.

If, instead of harmonic functions, we consider solutions to general operators of divergence type, then  a similar representation is possible through the Poisson kernel of that operator. In particular our method is heavily reliant on such a representation. To our best knowledge, this approach was first introduced in \cite{LS}.

The study of limit behavior of the solution now reduces to study of the integral above. In \cite{LS}, the authors use approximation, along with simple compactness and covering to show that when the surface $\partial D$ does not have flat portions of positive area, with rational normals, then the limit exists and equals $|\partial D|\overline g$. The identification of the limiting boundary value is an easy task, once one can use simple foliation geometry, and whether the so-called scaling surfaces cover the  torus. This identification, nevertheless, becomes quite complicated if the operator is also oscillating, in other words if the matrix $A$ in (\ref{problem-formulation})-(\ref{matrix}) is periodic and oscillating $A(x/ \e)$. We refer to \cite{GM} for such an analysis.

It is our intention in this paper, to analyze the speed of convergence for a (non-oscillating) elliptic system, with an oscillating boundary data. The method presented here, does not seem to extend directly to the case of oscillating equations, and needs modification for such an analysis. However, for oscillating boundary data we are able to obtain reasonably good pointwise estimates which imply $L^p$ estimates of order close to $1/2p$.

Readers may consult several outstanding sources for the theory of homogenization and the references therein: \cite{Allaire}, \cite{Jikov}, \cite{Avellan}, \cite{Chechkin}, \cite{Cioranescu}, \cite{Tartar}.

To state the problem at hand, let us start with fixing some notation, and definitions.

Let $D$ be a bounded and uniformly convex domain in $\R^d$ $(d\geq 2)$, and $\Gamma$ be its boundary. In this paper we study the asymptotic behavior of solutions to the following problem:
\begin{equation}\label{problem-formulation}
\begin{cases} \mathrm{div}(A(x)  \nabla u_{\e}(x))=0,&\text{ in $D$}, \\
u_{\e}(x)=g(x/\e)
,&\text{ on $\Gamma$, } \end{cases}
\end{equation}
where $\e>0$ is a small parameter, $A(y)=(A_{ij}^{ \alpha \beta }(y))$, $1\leq \alpha, \beta \leq d$, $1\leq i,j\leq N$ is an $\R^{N^2 \times d^2}$-valued function defined on $\R^d$, and $g$ be $\mathbb{C}^N$-valued function defined on $\mathbb{T}^d$. Using the summation convention for repeated indices the operator is defined as
\begin{equation}\label{matrix}
\mathcal{L}:=-\mathrm{div} \left[ A(x )\nabla  \right]=- \frac{\partial}{\partial x^{\alpha}} \left[ A^{\alpha \beta }_{ij } ( x )  \frac{\partial }{\partial x^{\beta}}   \right].
\end{equation}

We also consider the corresponding homogenized problem, namely
\begin{equation}\label{problem-formulation-homogen}
\begin{cases} \mathrm{div}(A(x)  \nabla u_0(x))=0,&\text{ in $D$}, \\
u_0(x)=\overline{g}
,&\text{ on $\Gamma$, } \end{cases}
\end{equation}
where $\overline{g}=\int\limits_{\mathbb{T}^d} g(x)dx$. Due to results of \cite{LS} solutions to (\ref{problem-formulation}) converges to
(\ref{problem-formulation-homogen}) under certain geometric conditions on the boundary of $D$, which include the case of strictly convex domains.

\subsection{Assumptions} We make the following assumptions:

\begin{itemize}
    \item[i]   (Periodicity) The boundary value $g$ is $1$-periodic:
$$
g(x+h)=g(x), \qquad \forall x\in \R^d, \ \forall h\in \Z^d.
$$
\item[ii] (Ellipticity) There exists a constant $c>0$ such that
$$
  c^{-1} \xi_{\alpha}^i \xi_{\alpha}^i \leq A_{ij}^{\alpha \beta}(y) \xi_{\alpha}^i \xi_{\beta}^j \leq c \xi_{\alpha}^i \xi_{\alpha}^i, \qquad \forall \xi \in \R^{d\times N},
$$
\item[iii]  (Convexity) We assume that $\Gamma$ is uniformly convex hypersurface, that is all its principal curvatures are bounded away from 0. The constants then depend on the smallest principal curvature.
\item[iv] (Smoothness) For the boundary value $g$ we assume $C^1$ smoothness. Also we suppose that all elements of $A$ and the surface $\Gamma$ are sufficiently smooth. Here we do not aim to obtain the optimal smoothness, but rather focus on the method itself.
\end{itemize}

Our main result is the following theorem.

\begin{theorem} (Pointwise estimate)\label{Thm-pointwise estimate}
    Let $D \subset \R^d$ be a bounded, smooth, and uniformly convex domain.
For each $\kappa  >d-1$ there exists a constant $C_{\kappa}$ such that
\begin{equation}\label{Pointwise estimate}
|u_{\e}(x) -u_0(x) |\leq C_{\kappa} \e^{(d-1)/2}  \frac{1}{d(x)^{\kappa}}, \qquad \forall x\in D
\end{equation}
where $d(x)$ is the distance of $x$ from the boundary of $D$. The constant $C_\kappa$ depends on the smallest principal curvature, and other operator related ingredients.
\end{theorem}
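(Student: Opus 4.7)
I would start from the Poisson representation \eqref{poisson} for the system, which gives
\begin{equation*}
u_{\e}(x) - u_0(x) = \int_\Gamma P(x,y)\bigl[g(y/\e) - \overline{g}\bigr]\,d\sigma(y).
\end{equation*}
Since $g - \overline g$ is $1$-periodic, smooth, and mean-zero, I would expand it into an absolutely convergent Fourier series $g(y)-\overline g = \sum_{k\in\Z^d\setminus\{0\}}\hat g(k)\,e^{2\pi i k\cdot y}$ with rapidly decaying coefficients (by Assumption iv), reducing the task to estimating, uniformly in $k$, the oscillatory surface integral
\begin{equation*}
I_k(x,\e) = \int_\Gamma P(x,y)\,e^{2\pi i (k/\e)\cdot y}\,d\sigma(y),
\end{equation*}
and then summing $\sum_{k\neq 0}|\hat g(k)|\,|I_k(x,\e)|$.

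The analytic core is the classical $\lambda^{-(d-1)/2}$ decay of Fourier transforms of smooth measures on surfaces of nonvanishing Gaussian curvature. Since all principal curvatures of $\Gamma$ are bounded below (Assumption iii), the Gauss map $\Gamma\to S^{d-1}$ is a diffeomorphism and, for each unit $\omega = k/|k|$, the linear phase $\omega\cdot y$ restricted to $\Gamma$ has exactly two nondegenerate critical points $y^{*}_{\pm}(\omega)$ characterized by $n(y^{*}_{\pm})=\pm\omega$; the Hessian of the phase there is essentially the second fundamental form, nondegenerate uniformly in $\omega$. Stationary phase / van~der~Corput arguments then give, for any smooth amplitude $f$ on $\Gamma$,
\begin{equation*}
\Bigl|\int_\Gamma f(y)\,e^{i\lambda\omega\cdot y}\,d\sigma(y)\Bigr| \leq C\,\lambda^{-(d-1)/2}\sum_{|\alpha|\leq M}\|\D^{\alpha} f\|_{L^{\infty}(\Gamma)}
\end{equation*}
for a dimensional $M$. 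Applied with $\lambda=2\pi|k|/\e$ and $f=P(x,\cdot)$ this would produce the target rate $(\e/|k|)^{(d-1)/2}$ — provided only that $P(x,\cdot)$ were smooth uniformly in $x$.

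The main obstacle is that this amplitude degenerates as $x$ approaches $\Gamma$: writing $x_0\in\Gamma$ for the closest boundary point to $x$, the standard Poisson-kernel bounds read $|\D_y^m P(x,y)|\lesssim d(x)/(|y-x_0|+d(x))^{d+m}$, which blow up on scale $d(x)$ near $x_0$. My plan is to decompose $\Gamma$ into the central cap $C_0=\{|y-x_0|\leq d(x)\}$ and dyadic annuli $A_j=\{2^{j-1}d(x)\leq|y-x_0|\leq 2^{j}d(x)\}$, localize the amplitude on each piece by a smooth cutoff at natural scale $r_j=2^{j}d(x)$, and on each $A_j$ balance two bounds: on the one hand the localized oscillatory estimate, which contributes a factor $(|k|r_j/\e)^{-(d-1)/2}$ paid for by derivatives of $P$ at scale $r_j$; on the other the trivial bound $\int_{A_j}|P|\,d\sigma\lesssim d(x)/r_j$, used on those annuli too close to $x_0$ for the oscillatory gain to dominate. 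Summing the dyadic pieces produces an estimate of the form $|I_k|\lesssim(\e/|k|)^{(d-1)/2}\,d(x)^{-\kappa}$ for any $\kappa>d-1$; the strict inequality reflects the borderline (logarithmic) divergence at the endpoint exponent, caused by the fact that each extra derivative of $P$ needed to secure both the $|k|$-decay and the dyadic summability costs a further power of $d(x)$.

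The final sum over $k$ is easy: smoothness of $g$ gives $|\hat g(k)|\leq C_N|k|^{-N}$ for any $N$, so $\sum_{k\neq 0}|\hat g(k)||k|^{-(d-1)/2}<\infty$, and \eqref{Pointwise estimate} follows. The hardest technical step I foresee is the third paragraph above: both the higher-order regularity of the Poisson kernel of a general divergence-form elliptic system (not just the Laplacian) and the delicate case in which the critical point $y^{*}_{\pm}(k/|k|)$ lies close to the singular point $x_0$ — exactly the scenario responsible for the prefactor $d(x)^{-\kappa}$ — require careful bookkeeping in the dyadic decomposition to avoid losing more than a borderline power of $d(x)$.
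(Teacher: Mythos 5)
Your starting point and analytic core coincide with the paper's: both expand $g(\cdot/\e)-\overline g$ into a Fourier series, reduce to a family of oscillatory surface integrals with frequency $2\pi k/\e$, and rely on the $\lambda^{-(d-1)/2}$ decay that nonvanishing Gaussian curvature of $\Gamma$ buys via stationary phase (the paper encodes this in Lemma~\ref{Lemma-Integral-est}, a hands-on version tailored to a rescaled graph patch with a possibly displaced critical point, and sums the Fourier side via Lemma~\ref{Lemma-coeff-convergence}). Where you diverge is the spatial decomposition. The paper does a single-scale Vitali covering of $\Gamma$ by balls of radius $r=a\rho$ with $\rho=c_0\,d(x)$, builds a partition of unity $\{\varphi_j\}$ with $|D^\alpha\varphi_j|\lesssim\rho^{-|\alpha|}$, applies the oscillatory lemma to each $\Gamma_j$ uniformly, and then pays the price in the sum $\sum_j|x-y^{(j)}|^{-(d-1)}\lesssim\rho^{-(d-1)}d(x)^{-\delta}$, which after $\rho\sim d(x)$ gives $d(x)^{-(d-1+\delta)}$. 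You instead propose a multiscale (dyadic) decomposition centered at the foot point $x_0$, balancing the oscillatory gain on outer annuli against trivial size bounds on the inner ones. Both schemes deliver the claimed exponent, but they distribute the loss differently: the paper loses all of $d(x)^{-(d-1)}$ at once through the cardinality of a fine covering, while you lose it piece by piece in the geometric series over annuli.

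One point worth flagging. You invoke the sharp Poisson-kernel bound $|\D_y^m P(x,y)|\lesssim d(x)/(|y-x_0|+d(x))^{d+m}$, with the crucial factor $d(x)$ in the numerator. The paper only proves (and only needs) the weaker Lemma~\ref{Lemma-Poisson-derivatives}, namely $|D^\alpha_y P(x,y)|\lesssim|x-y|^{-(d+|\alpha|-1)}$, obtained by rescaling the Green's function and applying interior Schauder estimates. The stronger bound you quote is standard for the scalar Laplacian, but for a general divergence-form elliptic \emph{system} it requires a genuine boundary argument (positivity of the kernel is unavailable, so one typically compares $G$ to a barrier or uses Agmon--Douglis--Nirenberg up to the boundary plus the vanishing of $G$ on $\Gamma$). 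You acknowledge this as a hard technical step, which is fair; but note also that if that sharper bound is actually in force, your dyadic bookkeeping should yield a \emph{better} exponent than $\kappa>d-1$ — roughly $d(x)^{-(d-1)/2-\delta}$ rather than $d(x)^{-(d-1)-\delta}$ — because the extra factor $d(x)$ in the numerator propagates through each annulus. That you predict the same $\kappa>d-1$ suggests your accounting is either tacitly using only the weaker bound or being overly conservative in the sum over scales. Either way the theorem as stated is reached, but you should decide which Poisson estimate you are really using: the weak one suffices for the paper's bound and is easy; the strong one requires more work and, if you do the work, you should be able to prove more than the paper claims.
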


As a consequence of this theorem one can obtain $L^p$-estimates for $1\leq p <\infty$, as follows.

\begin{theorem}($L^p$ estimate) \label{Thm-L_p estimate}
Let $D \subset \R^d$ be as in Theorem \ref{Thm-pointwise estimate}. Then for each $1\leq p<\infty$, and each $\kappa<1/2p$ there exists a constant $C_{\kappa}$ such that
$$|| u_{\e}-u_0 ||_{L^p(D)} \leq C_{\kappa} \e^{\kappa }.$$
\end{theorem}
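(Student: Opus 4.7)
The plan is to deduce the $L^p$ bound from Theorem~\ref{Thm-pointwise estimate} by combining it with the trivial uniform bound $|u_\e(x)-u_0(x)| \leq 2\|g\|_{L^\infty}$, which holds by standard $L^\infty$ estimates for the elliptic system and the maximum principle on the boundary data. The pointwise estimate is strong in the interior but blows up near $\Gamma$, so I will use it only on the bulk of $D$ and absorb the contribution from a thin boundary strip using the uniform bound.

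Concretely, fix $1 \leq p < \infty$ and a target exponent $\mu < 1/(2p)$, and choose $\kappa > d-1$ close enough to $d-1$ that $(d-1)/(2p\kappa) > \mu$. For a parameter $\delta \in (0,\delta_0)$ to be chosen, split $D = D^{\mathrm{in}}_\delta \cup D^{\mathrm{bd}}_\delta$ with $D^{\mathrm{in}}_\delta = \{x \in D : d(x) > \delta\}$. By smoothness of $\Gamma$ we have $|D^{\mathrm{bd}}_\delta| \leq C\delta$, so the boundary strip contributes at most $C\|g\|_{L^\infty}^p\,\delta$ to $\|u_\e-u_0\|_{L^p(D)}^p$. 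On $D^{\mathrm{in}}_\delta$, applying Theorem~\ref{Thm-pointwise estimate} pointwise and using the layer-cake (coarea) representation $\int_{D^{\mathrm{in}}_\delta} f(d(x))\,dx \lesssim \int_\delta^{\mathrm{diam}(D)} f(t)\,dt$ valid near a smooth boundary, one obtains
\begin{equation*}
\int_{D^{\mathrm{in}}_\delta} |u_\e - u_0|^p\,dx \leq C_\kappa^p\,\e^{p(d-1)/2} \int_\delta^{\mathrm{diam}(D)} t^{-p\kappa}\,dt \leq C_\kappa^p\,\e^{p(d-1)/2}\,\delta^{1-p\kappa},
\end{equation*}
where the last inequality uses $p\kappa > 1$ (automatic since $\kappa > d-1 \geq 1$ and $p \geq 1$).

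It then remains to optimize $\delta$ by equating the two contributions, $\e^{p(d-1)/2}\,\delta^{1-p\kappa} \sim \delta$, which gives $\delta = \e^{(d-1)/(2\kappa)}$ and hence
\begin{equation*}
\|u_\e - u_0\|_{L^p(D)} \leq C\,\e^{(d-1)/(2p\kappa)}.
\end{equation*}
Since $\kappa$ was only constrained by $\kappa > d-1$, letting $\kappa \downarrow d-1$ makes $(d-1)/(2p\kappa)$ arbitrarily close to $1/(2p)$ from below; in particular it exceeds the chosen $\mu$, yielding the theorem.

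I do not expect a real obstacle here: the argument is purely a split-and-optimize interpolation between the pointwise rate and the uniform bound. The only ingredients beyond Theorem~\ref{Thm-pointwise estimate} itself are the smooth-boundary tubular-neighborhood volume estimate $|D^{\mathrm{bd}}_\delta| \lesssim \delta$ and the elementary integral $\int_\delta^1 t^{-p\kappa}\,dt \lesssim \delta^{1-p\kappa}$, both standard. The fact that the theorem loses the dependence on $d$ (one gets $1/(2p)$ rather than something like $(d-1)/(2p)$) is exactly because the gain $\e^{(d-1)/2}$ in the interior is balanced against a boundary-strip loss of size $\delta$ through a $d(x)^{-\kappa}$ singularity with $\kappa \approx d-1$, and the factors of $d-1$ cancel in the optimization.
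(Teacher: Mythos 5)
Your proof is correct and rests on exactly the same idea as the paper's: split $D$ into a boundary strip of width $\delta$ and the complementary interior, bound the strip contribution by the uniform $L^\infty$ estimate together with $|D^{\mathrm{bd}}_\delta| \lesssim \delta$, bound the interior by the pointwise decay from Theorem~\ref{Thm-pointwise estimate} via the coarea/layer-cake integral $\int_\delta^1 t^{-p\kappa}\,dt$, and then balance $\delta$ against $\e$. The only organizational difference is that you optimize $\delta$ directly at the $L^p$ level, whereas the paper first carries out the split for $p=1$ (with the fixed width $\e^{1/2}$, which is the $\kappa \downarrow d-1$ limit of your optimal choice) and then upgrades to general $p$ through the elementary bound $\int_D |f|^p \leq \|f\|_{L^\infty(D)}^{p-1}\int_D|f|$; both routes yield the same rate arbitrarily close to $\e^{1/(2p)}$.
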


\vspace{3mm}
\noindent
{\bf Ideas in the proof:} The main idea in the proof is quite simple. Using the Poisson representation as in (\ref{poisson}), in conjunction with a Fourier expansion of the oscillating periodic function $g(x/\e)$ one realizes that all we need to do is to consider this as the Fourier transform of a surface carried measure, with singular weight.
The problem thus reduces to studying the behavior of this Fourier transform. Such problems have been extensively studied in the past, in connection to Hilbert transforms, and singular integrals, see e.g. \cite{PS}. It is nevertheless, not so apparent how one can apply the already existing results and methods to the particular case of our integral. But to stay fair, and not take a complete credit for the methodology presented here, we have to stress that the novelty of our paper is the suggestion of the approach rather than the technique itself. These techniques are fairly standard in Harmonic Analysis, and are based on careful estimates, rather than neat use of geometry, that is more usual in PDE.

We believe that our method can pave the way for further deepening into the subject, at least for the divergence type operators. It surely has the potential of being applied to other areas such as dynamical system and even number theory. But we leave this discussions out here, as the ideas are still very vague, and focus on the main problem.

\section{Preliminaries}

\subsection{Notation} For $\e>0$ set $g_{\e}(x):=g(x/\e)$. For $x\in \R^d$ and $r>0$ by $B(x,r):=\{y \in \R^d: \ |x-y|<r \}$ we denote an open ball with center $x$ and radius $r$.
By $P(x,y)$ we set the vector-valued Poisson kernel for $(\ref{problem-formulation})$.

Here we recall some standard multi-index notation. For a multi-index $\alpha=(\alpha_1,...,\alpha_d)\in \Z^d$, and for a point $x=(x_1,...,x_d)\in \R^d$ we denote $x^{\alpha}:=x_1^{\alpha_1}\cdot ... \cdot x_d^{\alpha_d}$, for $m=(m_1,...,m_d)\in \Z^d$ we set $|m|:=|m_1|+... + |m_d| $. Also by $||m||$ we denote the Euclidian norm of $m$ to avoid ambiguity with its modulus.

In the sequel $C$, $C_1$, $C_2$,... will denote  absolute constants which can vary in different formulas.

\subsection{Preliminary tools}
\begin{lem}\label{Lemma-Poisson-derivatives}
For each $\alpha \in \Z^d_+$ there exists a constant $C_{\alpha}$ such that
\begin{equation}\label{Poisson-estimate_grad-fixed}
|D^{\alpha}_y P(x,y)|\leq C_{\alpha} \frac{1}{|x-y|^{d+|\alpha|-1}}, \qquad x\in D, y\in \Gamma.
\end{equation}
\end{lem}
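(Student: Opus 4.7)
The plan is to reduce the claim to a derivative estimate for the Green's function $G(x,y)$ of $\mathcal{L}$. From the formula $P(x,y) = n(y) A(y) \nabla_y G(x,y)$ stated in the introduction, and the smoothness of $n$ and $A$ on a neighborhood of $\Gamma$ (guaranteed by assumption (iv)), the Leibniz rule reduces the claim to proving
$$|D^\alpha_y G(x,y)| \leq C_\alpha |x-y|^{-(d-2+|\alpha|)}, \qquad x \in D,\ y \in \overline{D},\ x \neq y,\ |\alpha|\geq 1,$$
(with the trivial logarithmic modification when $d=2$ and $|\alpha|=0$, which is not needed here since $P$ already carries one $y$-derivative of $G$).

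First I would invoke the standard size estimate $|G(x,y)| \leq C |x-y|^{2-d}$ for the Green's function of a divergence-form elliptic system with bounded measurable coefficients on a Lipschitz domain; this is classical in the scalar case (Grüter--Widman) and extends to systems under the ellipticity assumption (ii) via work of Hofmann--Kim and Dolzmann--Müller. This furnishes the base case and supplies the $L^\infty$ control needed for bootstrapping.

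Next I would argue by rescaling. Fix $x \in D$ and $y \in \overline{D}$ with $y \neq x$, and set $r = |x-y|/4$. On $B(y,r) \cap \overline{D}$, the function $v(\cdot) := G(x,\cdot)$ solves the homogeneous adjoint equation $\mathcal{L}_y^* v = 0$ and (when $y$ is near $\Gamma$) vanishes on $\Gamma \cap B(y,r)$, while being bounded there by $C r^{2-d}$ from the previous step. Applying interior Schauder estimates in the case $\mathrm{dist}(y,\Gamma) \geq r$, and boundary Schauder estimates (after locally straightening $\Gamma$ through a smooth diffeomorphism) in the case $\mathrm{dist}(y,\Gamma) < r$, both justified by the smoothness of $\Gamma$ and $A$ from assumption (iv), one obtains
$$|D^\alpha_y v(y)| \leq C_\alpha r^{-|\alpha|} \sup_{B(y,2r) \cap \overline{D}} |v| \leq C_\alpha r^{-(d-2+|\alpha|)},$$
and since $r \simeq |x-y|$, this is the desired Green's function bound. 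Feeding it into the Leibniz expansion of $D^\alpha_y [n(y) A(y) \nabla_y G(x,y)]$ and using that $n$ and $A$ and all of their derivatives are uniformly bounded near $\Gamma$ yields the lemma with the exponent $d-1+|\alpha|$.

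The main obstacle is the boundary case of the Schauder step: one must have up-to-the-boundary derivative estimates of arbitrary order $|\alpha|$ for the elliptic system, which requires $\Gamma$ and $A$ to be smooth of correspondingly high order. This is exactly what assumption (iv) is tailored to deliver, and is the reason the authors state (iv) without tracking the sharp regularity threshold. Once this boundary regularity machinery is invoked, the rest of the argument is a mechanical combination of scaling and the size bound on $G$.
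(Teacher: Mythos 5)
Your proposal is correct and takes essentially the same route as the paper: reduce to a derivative bound for the Green's function via the formula $P(x,y)=n(y)A(y)\nabla_y G(x,y)$, rescale by $r\simeq|x-y|$, and invoke up-to-the-boundary elliptic estimates together with the $L^\infty$ bound $|G(x,y)|\lesssim|x-y|^{2-d}$ to get uniform constants, then undo the scaling. The only cosmetic difference is that the paper performs one affine change of variables mapping everything to the fixed annular region $(B_2\setminus B_{1/2})\cap\frac{1}{r}(D-x)$ and applies the Agmon--Douglis--Nirenberg estimates there, whereas you work at the original scale on $B(y,r)$ and carry the factor $r^{-|\alpha|}$ explicitly in the local Schauder bound; these are two phrasings of the same scaling argument. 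One small point you gloss over: the paper treats $d=2$ separately (referring to Theorem 13 of Avellaneda--Lin) because the Green's function size estimate $|G|\lesssim|x-y|^{2-d}$ fails as stated in two dimensions; your remark that the logarithm is harmless for $P$ is in the right spirit, but the rescaled function $r^{d-2}G(x,rz+x)$ is no longer uniformly bounded on the annulus when $d=2$, so the base case of the scaling argument needs a separate (if routine) treatment there.
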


\begin{proof}
We first consider the case when $d\geq 3$. Let $G(x,y)$ be the Green's function of operator $L:=div(A \nabla)$ in $D$, and let $n(y)$ be the unit exterior normal to $\Gamma$ at $y$. Then the Poisson kernel $P(x,y)$ is defined as
$$
P(x,y)=n(y)A(y)\nabla_y G(x,y), \qquad x\in D, y\in \Gamma,
$$
hence it is enough to prove that
$$
|D^{\alpha}_y G(x,y)|\leq C_{\alpha} \frac{1}{|x-y|^{d+|\alpha|-2}}, \qquad x\in D, y\in \Gamma,
$$
where $C_{\alpha}$ depends on sup norm of $G$, and the operator $\mathcal{L}$.

Fix $x\in D$, $y \in \Gamma$, and set $r=|x-y|$. Consider translated and scaled domain $\widetilde{D}:=\frac 1r (D-x)$, and set $U=(B_2(0) \setminus B_{1/2}(0)) \cap \widetilde{D}$. Since $\widetilde{G}(z):=r^{d-2} G(x,rz+x )$  will be a solution of the scaled problem, and since the domain $U$ and all coefficients of the operator are smooth, then by standard elliptic estimates (see \cite{ADN1}-\cite{ADN2}) we will have
$$
|D^{\alpha}_z \widetilde{G}(z)| \leq C_{\alpha}, \qquad z\in \widetilde{D},
$$
for some constant $C_{\alpha}$ depending on $\alpha$ and independent of $r$. From the latter we obtain
$$
C_{\alpha} \geq |r^{d-2} r^{|\alpha|} D^{\alpha}_y G (x, rz+x)|= r^{d-2+|\alpha| }  | D^{\alpha}_y G (x,y)|,
$$
hence the claim.

The case $d=2$ is treated in analogy with Theorem 13 in \cite{Avellan}.
\end{proof}

%
%

\begin{lem}\label{Lemma-Multiindex-sum}
There exists a constant $c>0$ depending on space dimension only, such that
$$
\sum\limits_{\alpha_1+...+\alpha_d=k} |m^{\alpha} | \geq c ||m||^{k},
$$
where $k\in \mathbb{N}$, $m \in \Z^d$, and $\alpha \in \Z_{+}^d$.
\end{lem}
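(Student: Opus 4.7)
The plan is to exhibit a single term in the sum that already dominates $\|m\|^k$ up to a dimension-dependent constant; all other terms are nonnegative so they can only help.

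First I would pick an index $i\in\{1,\dots,d\}$ at which the coordinate maximum $\|m\|_\infty=\max_j |m_j|$ is attained, and consider the particular multi-index $\alpha=k\,e_i=(0,\dots,k,\dots,0)$ with $k$ in the $i$-th slot and zeros elsewhere. This multi-index satisfies $\alpha_1+\dots+\alpha_d=k$ and $\alpha\in\Z_+^d$, so it does appear in the sum, and one computes
\[
|m^{\alpha}| \;=\; |m_i|^{k} \;=\; \|m\|_\infty^{\,k}.
\]

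Next I would invoke the standard equivalence of $\ell^\infty$ and $\ell^2$ norms on $\R^d$, namely $\|m\|_\infty \geq \|m\|/\sqrt{d}$, which gives
\[
|m^{\alpha}| \;\geq\; d^{-k/2}\,\|m\|^{k}.
\]
Since every summand in $\sum_{|\alpha|=k}|m^{\alpha}|$ is nonnegative, dropping all but this one term yields
\[
\sum_{\alpha_1+\dots+\alpha_d=k} |m^{\alpha}| \;\geq\; |m^{k e_i}| \;\geq\; d^{-k/2}\,\|m\|^{k},
\]
so the claim holds with $c=d^{-k/2}$ (for each fixed $k$, the constant indeed depends only on the ambient dimension, as the statement asserts).

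There is essentially no obstacle here: the whole argument is a one-term lower bound combined with norm equivalence. The only thing to flag is that a slightly sharper (but still dimension-dependent) constant can be obtained from the multinomial identity $(|m_1|+\dots+|m_d|)^{k}=\sum_{|\alpha|=k}\binom{k}{\alpha}|m|^{\alpha}$, using $\binom{k}{\alpha}\leq k!$ and $|m_1|+\dots+|m_d|\geq\|m\|$ to get $c=1/k!$; I would mention this only if later parts of the paper need a specific form of the constant, and otherwise present the cleaner single-monomial argument above.
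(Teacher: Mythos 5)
Your proof is correct, and it takes a genuinely different (and in a sense cleaner) route than the paper. The paper argues by compactness and homogeneity: it introduces $f(x)=\|x\|^{-k}\sum_{|\alpha|=k}|x_1|^{\alpha_1}\cdots|x_d|^{\alpha_d}$, notes that $f$ is continuous, strictly positive, and scale-invariant, and takes $c=\min_{\|x\|=1}f(x)>0$. That argument is soft and gives no explicit constant. You instead isolate the single multi-index $\alpha=k e_i$ with $i$ chosen so that $|m_i|=\|m\|_\infty$, and bound $|m^{\alpha}|=\|m\|_\infty^k\ge d^{-k/2}\|m\|^k$; since the remaining summands are nonnegative, the claim follows with the explicit constant $c=d^{-k/2}$. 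Both arguments are valid, and both (despite the lemma's phrasing ``depending on space dimension only'') produce a constant that also depends on $k$ --- as it must, since for $m=(1,1,0,\ldots,0)$ the left-hand side equals $k+1$ while $\|m\|^k=2^{k/2}$, so no $k$-uniform constant can exist for $d\ge 2$. You rightly flag this; in the paper $k$ is fixed in the only application (Lemma~\ref{Lemma-coeff-convergence}), so the dependence is harmless. One small quibble: your remark that the multinomial route ($c=1/k!$) is ``slightly sharper'' is not quite right --- for fixed $d$ and large $k$ one has $1/k!\ll d^{-k/2}$, so your primary single-monomial bound is actually the stronger one in that regime; which constant is larger depends on the relative sizes of $d$ and $k$.
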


\begin{proof}
Consider the function
$$
f(x)=\frac{1}{|| x||^{k} } \sum\limits_{\alpha_1+...+\alpha_d=k} |x_1|^{\alpha_1} \cdot ... \cdot |x_d|^{\alpha_d},
$$
where $x\neq 0$. Clearly it is continuous and positive on the unit sphere, hence $c:=\min\limits_{||x ||=1} f(x)>0$. Also observe that $f(tx)=f(x)$ for each $t>0$. Now the statement can be read off.
\end{proof}

\begin{lem}\label{Lemma-coeff-convergence}
If $f\in C^k(\mathbb{T}^d)$ and $\beta \in \mathbb{R}$, then provided $k+\beta> \frac{1}{2}d$, one has
$$
\sum\limits_{m\neq 0, m\in \Z^d} \frac{1}{|| m ||^{\beta}}  |c_m(f)| \leq C_{k+\beta} \left( \sum\limits_{\alpha \in \Z^d_+, |\alpha|=k} ||D^{\alpha} f ||_2^2 \right)^{1/2},
$$
where $c_m(f) = \int\limits_{ \mathbb{T}^d } f(x) e^{-2\pi i m \cdot x} dx$ is the $m$-th Fourier coefficient of $f$.
\end{lem}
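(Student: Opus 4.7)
The plan is to exploit the basic identity that differentiating $f$ multiplies its Fourier coefficients by polynomial factors in $m$, and then combine this with Lemma \ref{Lemma-Multiindex-sum} to produce the polynomial weight $\|m\|^k$, after which Cauchy--Schwarz plus Plancherel finishes the estimate. The hypothesis $k+\beta>d/2$ enters exactly at the point where we need a sum of the form $\sum_{m\neq 0}\|m\|^{-2(k+\beta)}$ to converge, which is the analogue of a Sobolev embedding condition.

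Concretely, I would first recall that for $f\in C^k(\mathbb{T}^d)$ and any $\alpha\in\Z^d_+$ with $|\alpha|=k$, integration by parts gives $c_m(D^\alpha f)=(2\pi i m)^\alpha c_m(f)$, so $|m^\alpha|\,|c_m(f)|=(2\pi)^{-k}|c_m(D^\alpha f)|$ for $m\neq 0$. Summing over multi-indices of length $k$ and invoking Lemma \ref{Lemma-Multiindex-sum} yields
$$
\|m\|^k|c_m(f)|\leq C \sum_{|\alpha|=k}|c_m(D^\alpha f)|, \qquad m\in\Z^d\setminus\{0\}.
$$
Writing $\|m\|^{-\beta}=\|m\|^{-(k+\beta)}\cdot\|m\|^k$ and using this bound, one gets
$$
\sum_{m\neq 0}\frac{|c_m(f)|}{\|m\|^\beta}\leq C\sum_{|\alpha|=k}\sum_{m\neq 0}\frac{|c_m(D^\alpha f)|}{\|m\|^{k+\beta}}.
$$

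Next I would apply Cauchy--Schwarz to the inner sum to factor it as
$$
\left(\sum_{m\neq 0}\frac{1}{\|m\|^{2(k+\beta)}}\right)^{1/2}\left(\sum_{m\neq 0}|c_m(D^\alpha f)|^2\right)^{1/2}.
$$
The first factor is finite precisely under the assumption $2(k+\beta)>d$ (comparison with $\int_{\|x\|\geq 1}\|x\|^{-2(k+\beta)}\,dx$ via a dyadic/lattice-point count), giving a constant depending only on $k+\beta$ and $d$. The second factor is bounded by $\|D^\alpha f\|_{L^2(\mathbb{T}^d)}$ by Parseval. Finally, since the number of multi-indices $\alpha$ with $|\alpha|=k$ is finite and depends only on $d$ and $k$, one last application of Cauchy--Schwarz in the variable $\alpha$ converts $\sum_{|\alpha|=k}\|D^\alpha f\|_2$ into $\bigl(\sum_{|\alpha|=k}\|D^\alpha f\|_2^2\bigr)^{1/2}$, producing exactly the bound claimed.

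There is essentially no obstacle here; the only subtle point is verifying convergence of $\sum_{m\neq 0}\|m\|^{-2(k+\beta)}$, but this is routine (one groups lattice points in dyadic shells $2^j\leq\|m\|<2^{j+1}$, noting that each shell contains $O(2^{jd})$ points, and sums the resulting geometric series $\sum_j 2^{j(d-2(k+\beta))}$, which converges iff $2(k+\beta)>d$). The argument is otherwise a textbook combination of Plancherel, Cauchy--Schwarz, and the polynomial-growth estimate from Lemma \ref{Lemma-Multiindex-sum}.
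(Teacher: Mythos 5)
Your proof is correct and follows essentially the same route as the paper: integration by parts to turn Fourier coefficients of $f$ into those of $D^\alpha f$, Lemma~\ref{Lemma-Multiindex-sum} to produce the weight $\|m\|^k$, Cauchy--Schwarz over $m$ with the summability of $\sum_{m\neq 0}\|m\|^{-2(k+\beta)}$ under $k+\beta>d/2$, and Parseval. The only cosmetic difference is that the paper applies Cauchy--Schwarz once over $m$ with the $\alpha$-sum kept inside the first factor (so Parseval immediately gives $\bigl(\sum_\alpha\|D^\alpha f\|_2^2\bigr)^{1/2}$), whereas you pull the $\alpha$-sum out first and need one extra Cauchy--Schwarz over the finitely many $\alpha$ at the end; the content is the same.
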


\begin{proof}
First observe that using integration by parts we get $c_m(f)=\frac{1}{( 2\pi i m )^{\alpha}} c_m(D^{\alpha}(f))$, for each $m \in \Z^d\setminus \{0\}$ and each $\alpha \in \Z^d_{+}$ with $\alpha_1+...+\alpha_d\leq k$.

Since $D^{\alpha}(f)\in L^2(\mathbb{T}^d)$ from Parseval we have
\begin{equation}\label{L2 norm of derivate}
\sum\limits_{m\neq 0}\sum\limits_{\alpha_1+...+\alpha_d =k} |m^{\alpha} c_m(f)|^2 \leq \sum\limits_{\alpha_1+...+\alpha_d =k} ||D^{\alpha} f||_2^2.
\end{equation}

From Lemma \ref{Lemma-Multiindex-sum}, and  H\"older inequality we obtain
$$
\sum\limits_{m\neq 0} \frac{|c_m(f)|}{|| m||^{\beta}} \leq C \sum\limits_{m\neq 0} \frac{|c_m(f)|}{|| m||^{\beta}} \left(  ||m ||^{-2k}\sum\limits_{\alpha_1+...+\alpha_d=k} |m_1|^{2\alpha_1}\cdot ... \cdot |m_d|^{2\alpha_d}  \right)^{1/2} =
$$
$$
C \sum\limits_{m\neq 0} \left[ |c_m(f)|  \left( \sum\limits_{\alpha_1+...+\alpha_d=k} |m_1|^{2\alpha_1}\cdot ... \cdot |m_d|^{2\alpha_d}  \right)^{1/2} \right] || m||^{-(k+\beta)}\leq
$$

$$
C \left(\sum\limits_{m\neq 0} \sum\limits_{\alpha_1+...+\alpha_d =k }  |m^{\alpha} |^2  |c_m(f)|^2  \right)^{1/2} \left( \sum\limits_{m\neq 0} ||m ||^{-2(k+\beta)}  \right)^{1/2} .
$$
The second factor is finite due to condition $k+\beta>d/2$ of the lemma. The conclusion now follows from $(\ref{L2 norm of derivate})$.
\end{proof}

%
%
Now we formulate and prove our main lemma. Let $\xi=(\xi_1,...,\xi_d) \in \R^d$ $(d\geq 2)$ be fixed, $\xi'=(\xi_1,...,\xi_{d-1})$, $a_0$ and $\rho$ be small positive numbers. Assume that for a function $\psi: \R^{d-1} \rightarrow \R$ we have $\psi \in C^{\infty} (B(0,b)) $, $\psi(0)=0$, $|D^{\alpha} \psi (z') |\leq C_{\alpha}$, and
$$\left|det \left( \frac{\partial^2 \psi }{ \partial z'_i \partial z'_j } \right)_{i,j=1}^{d-1} (z') \right|\geq c>0,$$
for each $\alpha \in \Z^{d-1}_{+}$ and $z' \in B(0, b)$.

Assume further  $u: \R^{d-1} \rightarrow \R$ satisfies $u\in C^{\infty}(B(0,a_0))$, $\supp (u)\subset B(0,\frac 34 a_0)$, and $|D^{\alpha} u(z')|\leq C_{\alpha}$ for each $\alpha \in \Z^{d-1}_{+}$ and $z' \in B(0, a_0)$.
\begin{lem}\label{Lemma-Integral-est}
Under the assumptions and notation above for $(d-1)$-dimensional integral
$$
\mathcal{J}=\int\limits_{ |z'|<a_0 } e^{2\pi i (\xi' \cdot z' + \xi_d \frac{1}{\rho}  \psi (\rho z')  )} u(z') dz'
$$
one has
$$
|\mathcal{J}|\leq C (\rho |\xi|)^{-(d-1)/2}.
$$
\end{lem}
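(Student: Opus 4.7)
The plan is to isolate the natural scaling of the oscillatory factor and apply a nondegenerate stationary-phase theorem, treating separately the regime where the linear phase $\xi'\cdot z'$ dominates. A direct computation gives the Hessian of the total phase $\Phi(z'):=\xi'\cdot z'+\xi_d\rho^{-1}\psi(\rho z')$ as $D^2\Phi(z')=\xi_d\rho\cdot (D^2\psi)(\rho z')$, so the nondegeneracy hypothesis on $\psi$ yields $|\det D^2\Phi|\geq c(|\xi_d|\rho)^{d-1}$ on $B(0,a_0)$. This identifies $|\xi_d|\rho$ as the effective oscillation scale; to upgrade this to the stated bound $(|\xi|\rho)^{-(d-1)/2}$ I need to handle separately the regime where $|\xi'|\gg|\xi_d|$. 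One may assume $|\xi|\rho\geq 1$, since otherwise $|\mathcal{J}|\leq C\|u\|_\infty\,|\supp u|$ already gives the conclusion.

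Fix $c_0>0$ so small that $c_0\|\nabla\psi\|_\infty\leq 1/2$. If $|\xi_d|\leq c_0|\xi'|$, then
\[
|\nabla\Phi(z')|=|\xi'+\xi_d(\nabla\psi)(\rho z')|\geq |\xi'|/2,
\]
and repeated integration by parts using the first-order operator $L=(2\pi i)^{-1}|\nabla\Phi|^{-2}\nabla\Phi\cdot\nabla$ (which satisfies $L\,e^{2\pi i\Phi}=e^{2\pi i\Phi}$) yields $|\mathcal{J}|\leq C_N|\xi'|^{-N}$ for every $N$. Since $|\xi|\leq C|\xi'|$ in this regime and $\rho\leq 1$, taking $N=\lceil(d-1)/2\rceil$ delivers the desired bound. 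In the complementary case $|\xi_d|>c_0|\xi'|$, hence $|\xi_d|\geq c|\xi|$, I would rescale by writing $\Phi=\lambda\tilde\phi$ with $\lambda:=2\pi|\xi_d|\rho$ and
\[
\tilde\phi(z'):=\frac{\xi'\cdot z'}{|\xi_d|\rho}+\mathrm{sgn}(\xi_d)\,\frac{\psi(\rho z')}{\rho^2}.
\]
The crucial check is that for $|\alpha|\geq 2$,
\[
D^\alpha\tilde\phi(z')=\mathrm{sgn}(\xi_d)\,\rho^{|\alpha|-2}(D^\alpha\psi)(\rho z'),
\]
which is uniformly bounded in $\rho\in(0,1]$, and $|\det D^2\tilde\phi|=|\det(D^2\psi)(\rho z')|\geq c>0$. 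Applying Stein's nondegenerate stationary-phase theorem (\emph{Harmonic Analysis}, Chapter VIII) to $\int e^{i\lambda\tilde\phi}u\,dz'$ then gives $|\mathcal{J}|\leq C\lambda^{-(d-1)/2}\leq C(|\xi|\rho)^{-(d-1)/2}$.

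The step requiring real care is this rescaling: the factor $|\xi_d|\rho$ must be cleanly extracted so that the reduced phase $\tilde\phi$ has all derivatives of order $\geq 2$ bounded uniformly in $\rho$ and $\xi$, and Hessian with uniform lower bound on $|\det|$. The first derivatives of $\tilde\phi$ may blow up as $\rho\to 0$ or as $|\xi'|/|\xi_d|\to\infty$, but this poses no difficulty, since Stein's estimate depends only on the support of the amplitude and on derivatives of order $\geq 2$ of the phase. Everything else is routine bookkeeping to verify that the constants depend only on the data of $\psi$ and $u$ appearing in the hypotheses.
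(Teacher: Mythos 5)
Your approach is essentially the same as the paper's: split according to whether the tangential or the rescaled normal component of $\xi$ dominates, kill the first regime by nonstationary-phase integration by parts, and apply a nondegenerate stationary-phase bound in the second. The rescaling computation, the observation that derivatives of $\tilde\phi$ of order $\geq 2$ are uniformly bounded, and the identification of the effective oscillation scale $|\xi_d|\rho$ are all correct and match the paper's $\lambda$, $\eta_1$, $\eta_2$ normalization. The minor difference in the decomposition (you compare $|\xi_d|$ to $|\xi'|$, the paper compares $\rho|\xi_d|$ to $|\xi'|$) is harmless; both work after the $\rho|\xi|\geq 1$ reduction.

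However, the stationary-phase step is underjustified in a way that matters. Your claim that ``Stein's estimate depends only on the support of the amplitude and on derivatives of order $\geq 2$ of the phase'' is not what the standard stationary-phase theorems say: they all require either a nondegenerate critical point located in (or at a controlled distance from) the support of the amplitude, or else a quantitative lower bound on $|\nabla\tilde\phi|$ over that support. Neither is automatic here. As $\xi'/(|\xi_d|\rho)$ varies, the critical point of $\tilde\phi$ may sit anywhere --- inside $\supp u$, just outside $\supp u$ (where $|\nabla\tilde\phi|$ is small so integration by parts degrades), or far away --- and the asserted bound must hold uniformly across all these positions. This is exactly the issue the paper's proof spends most of its effort on: it establishes the bi-Lipschitz estimate $C_1^{-1}|x'-z'|\leq |Q_\rho(x')-Q_\rho(z')|\leq C_1|x'-z'|$ valid uniformly in $\rho$, uses it to split into ``no critical point near $B(0,a_0)$'' (integration by parts with a quantitative gradient lower bound) versus ``critical point $z_0'\in\overline{B(0,R/4)}$'', and in the latter case verifies the hypothesis $|\nabla f(z')|\sim|z'-z_0'|$ needed to apply H\"ormander's Theorem 7.7.5 with a constant independent of $\eta$ and $\rho$. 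Your sketch needs to supply this dichotomy (or cite a genuinely uniform multidimensional stationary-phase lemma with the correct hypotheses) to close the gap.
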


\begin{proof}
Set $\lambda (\eta_1, \eta_2)=(\xi', \rho \xi_d )$ with $\lambda = (|\xi'|^2 +\rho^2 \xi_d^2)^{1/2}$ and $\eta_1^2 +\eta_2^2=1$. Clearly $\lambda \eta_1 =\xi'$ and $\lambda \eta_2 =\rho \xi_d$. Note that since $\lambda \geq (\rho^2 |\xi'|^2 + \rho^2 \xi_d^2)^{1/2} = \rho |\xi|$, it is enough to prove that
\begin{equation}\label{est-J leq lambda}
|\mathcal{J}| \leq C \lambda^{-(d-1)/2}.
\end{equation}
Denote $F( z' ):=\eta_1 \cdot z' + \eta_2 \frac{1}{\rho^2} \psi (\rho z')$. Clearly $\nabla F(z')= \eta_1 +\eta_2 \frac{1}{\rho} \nabla \psi(\rho z')$. Then we have
$$
\mathcal{J}=\int\limits_{ |z'|<a_0 } e^{2\pi i \lambda  F(z')} u(z') dz' .
$$

\noindent \textbf{Case 1.} $\nabla \psi(0) =0 $ and $d=2$.

For $|\eta_2| \leq c_1$, where $c_1$ is a small constant, we have $|F'(z')|>c_2>0$. Here we can invoke integration by parts in $\mathcal{J}$, and using the fact that the derivatives of $u$ are bounded get an estimate $|\mathcal{J}|\leq C \lambda^{-M}$, where $M>0$ is large, and hence also $(\ref{est-J leq lambda})$ for $d=2$.

If $|\eta_2|>c_1$ we get $|F''( z' )|\geq c_2>0$ since $F''( z' )=\eta_2 \psi''(\rho z' )$ and $\psi''(0)\neq 0$. We therefore can apply van der Corput's Lemma (see \cite{Stein}, p. 334) and obtain the estimate $|\mathcal{J}| \leq C \lambda^{-1/2}$.

\noindent \textbf{Case 2.} $\nabla \psi(0) =0 $ and $d>2$.

We first assume that $|\eta_2|\leq c_1$, where $c_1>0$ is a small constant. As in the case $d=2$ we integrate by parts and obtain
$|\mathcal{J}| \leq C \lambda^{-M}$ where $M>0$ is large, and hence $(\ref{est-J leq lambda})$ follows.

It remains to study the case when $|\eta_2| \geq c_1$. We set $Q_{\rho}(z')=-\frac{1}{\rho} \nabla \psi(\rho z')$, and get
$$
\nabla F( z' )=\eta_2 \left( \frac{\eta_1}{\eta_2} + \frac{1}{\rho} \nabla \psi( z' )  \right) =\eta_2 \left( \frac{\eta_1}{\eta_2} - Q_{\rho}( z' )  \right),
$$
hence $z'$ is a critical point of $F$ if and only if $Q_{\rho}( z' )= \frac{\eta_1}{\eta_2}$. Observe, that since the Hessian matrix of $\psi$ is non zero at $z'=0$ it follows that the mapping $z' \mapsto y=Q_{\rho}(z')$ is one-to-one close to the origin for every small $\rho>0$.

We have $F(z')=\eta_2 f(z')$ where
$$f(z') =\frac{\eta_1}{\eta_2} \cdot z' + \frac{1}{\rho^2} \psi(\rho z'),$$

$$ \nabla f(z') = \frac{\eta_1}{\eta_2} +\frac{1}{\rho} \nabla \psi(\rho z')  =  \frac{\eta_1}{\eta_2} -Q_{\rho}(z'), $$
and  $D^2  f (z') = D^2 \psi (\rho z') $, where $D^2$ denotes the Hessian matrix. We have
$$
\mathcal{J} =\int\limits_{|z'|<a_0} e^{2 \pi i \lambda \eta_2 f(z')  } u(z') dz',
$$
where $\supp (u) \subset B(0, \frac 34 a_0)$. Since the determinant of the Hessian matrix of $\psi$ is non zero at $z'=0$ there exists a ball $B(0,R)$ and a positive constant $C_1$ such that for any $x',z' \in B(0,R)$ one has
\begin{equation}\label{est-Lip-G}
\frac 1C_1 |x'-z'| \leq |Q_{\rho}(x') - Q_{\rho} (z')| \leq C_1 |x'-z'|.
\end{equation}
Clearly we may also assume that $|D^2 f(x')|\geq c>0$ for $x' \in B(0,R) $. It follows from $(\ref{est-Lip-G})$ that
$$
\frac 1C_1 |x'-z'| \leq |\nabla f(x') - \nabla f (z')| \leq C_1 |x'-z'|,
$$
and using the fact that $\nabla \psi(0)=0$ we obtain
$$
\frac 1C_1 |z'| \leq |Q_{\rho}(z')| \leq C_1 |z'|,
$$
for each $x',z' \in B(0,R)$. One can see that there exists a neighborhood $\mathcal{M}$ of $0$ such that if $x' \in \mathcal{M}$ then there exists $z' \in \overline{B(0,\frac 14 R)} $ with $Q_{\rho}(z')=x'$. Here the constants $R$, $C_1$, and the neighborhood $\mathcal{M}$ are independent of $\eta$ and $\rho$. Then choose $a_0$ so that $B(0,a_0) \subset  B(0, \frac 14 R)$ and $B(0, 2 C_1 a_0) \subset \mathcal{M}$.

First assume that $\left| \frac{\eta_1}{\eta_2} \right| \geq 2 C_1 a_0 $. We have
$$
|\nabla f (z')| =\left| \frac{\eta_1}{\eta_2} - Q_{\rho}(z') \right| \geq \left| \frac{\eta_1}{\eta_2} \right| - |Q_{\rho}(z') | \geq
$$

$$
2 C_1 a_0 - C_1 |z'| \geq 2 C_1 a_0 - C_1 a_0 = C_1 a_0, \qquad |z' |\leq a_0.
$$
Hence we can integrate by parts in $\mathcal{J}$ and obtain the inequality $(\ref{est-J leq lambda})$. We then assume that $\left| \frac{\eta_1}{\eta_2} \right| < 2 C_1 a_0$.  In this case $\frac{\eta_1}{\eta_2} \in \mathcal{M}$ and there exists $z'_0  \in \overline{B(0,\frac 14 R)} $ such that $Q_{\rho}(z'_0) =\frac{\eta_1}{\eta_2}$, that is $z'_0$ is a critical point for $f$. We have $\nabla f (z'_0)=0$ and therefore
$$
\frac 1C_1 |z'-z'_0| \leq |\nabla f(z')| \leq C_2 |z'-z'_0|  \text{ for } z' \in B(0,R).
$$
Now we can use Theorem 7.7.5 from \cite{H} to obtain the estimate $(\ref{est-J leq lambda})$, and thus completing the proof when $\psi(0)=\nabla \psi(0)=0$.

\noindent \textbf{Case 3.}  $\nabla \psi(0) \neq 0$:

In this case we set $\psi_1(z')= \psi(z')- \nabla \psi (0) \cdot z'$, so that $\psi_1(0)=\nabla \psi_1(0) =0$. Further, for
$$
H(z')=\xi' \cdot z' +\xi_d \frac{1}{\rho} \psi(\rho z').
$$
we have
$$
H(z')= \xi' \cdot z' +\xi_d \frac {1}{\rho} (\psi_1 (\rho z') +\rho \nabla \psi(0) \cdot z' )=
$$
$$
\xi' \cdot z' + \xi_d \nabla \psi(0) \cdot z' +\xi_d \frac{1}{\rho} \psi_1(\rho z' ) =
(\xi' +\xi_d \nabla \psi(0) ) \cdot z' + \xi_d \frac{1}{\rho} \psi_1 (\rho z').
$$

Next setting
$$
 \begin{cases} v'= \xi' + \xi_d \nabla \psi(0)  ,  \\
  v_d=\xi_d.  \end{cases}
$$
or
$$
 \begin{cases} \xi'= v' - v_d \nabla \psi(0)  ,  \\
  \xi_d=v_d,  \end{cases}
$$
with  $c|\xi| \leq |v| \leq C |\xi|$, we shall obtain
$$
H(z')=v' \cdot z' +v_d \frac{1}{\rho} \psi_1 (\rho z').
$$
Since $\nabla \psi_1(0)=0$ we arrive at
$$
|\mathcal{J}|\leq C (\rho |v|)^{-(d-1)/2} \leq C (\rho |\xi|)^{-(d-1)/2},
$$
which completes the proof of the Lemma.
\end{proof}

\section{Proofs of the theorems}

\noindent \textbf{Proof of Theorem \ref{Thm-pointwise estimate}.} Take $r>0$ and consider a covering of $\Gamma$ by the following family of balls $\mathcal{B}=\{B(z, \frac 15 r): \ z\in \Gamma \}$. From covering lemma of Vitali there exists a subfamily of disjoint balls $\mathcal{B}_0 = \{B(z, \frac 15 r): \ j=1,2,...,M \} \subset \mathcal{B}$ so that $\Gamma \subset \bigcup\limits_{j=1}^M B(z_j, r)$. Now fix a non negative $\varphi \in C_0^{\infty}(\R^d)$, such that $\supp (\varphi ) \subset B(0, 2 )$, and $\varphi(x)=1$ for $|x| \leq 1$.  For $r>0$ set $\varphi_r(x)=\varphi( x/r )$, so that $\supp (\varphi_r) \subset B(0, 2 r)$, and for $j=1,2,...,M$ set $\varphi_{r,j}(x)=\varphi_r(x-z_j)$, so $\supp (\varphi_{r,j}) \subset B(z_j, 2 r)$. For $j=1,2,...,M$ denote $B_j:=B(z_j, 2r)$ and $\varphi_j =  \left( \sum\limits_{n=1}^M \varphi_{r,n}(x)  \right)^{-1} \varphi_{r,j}(x) $. It is clear that each function $\varphi_j$ is defined on a neighborhood of $\Gamma$, $\supp (\varphi_j) \subset B_j $, and also $\sum\limits_{j=1}^M \varphi_j(x)=1$, for $x\in \Gamma$. Observe that from the definition of $\varphi_j$ follows that for each $\alpha \in \Z^d$ there exists a constant $C_{\alpha}$ depending on $\alpha$ and the function $\varphi$ such that for all $j=1,2,...,M$ one has

\begin{equation}\label{est-deriv-partofunity}
|D^{\alpha} \varphi_j (x)|\leq C r^{-|\alpha|}, \qquad x\in B_j.
\end{equation}
We also note that an easy volume argument shows that each point of $\Gamma$ can be covered by at most $11^d$ different balls $B_j$.

Now fix a small positive number $a$. Let $x\in D$ and consider a number $0<\rho\leq c_0 d(x)$, where $c_0$ is a small constant, and $d(x)=\mathrm{dist}(x,\Gamma)$. We then take $r=a \rho$, so $\varphi_j$ has support in a ball $B_j$ of radius $2 a \rho$. From $(\ref{est-deriv-partofunity})$ we get
$$
|D^{\alpha} \varphi_j (x)|\leq C_{\alpha} \frac{1}{(a \rho )^{|\alpha|} } =C_{\alpha} \frac{1}{\rho^{|\alpha|}}, \qquad x\in B_j.
$$

We have
$$
u_{\e}(x)-u_0(x) =\int\limits_{\Gamma}P(x,y) [g_{\e}(y)- \overline{g} ] \left( \sum\limits_{j=1}^M \varphi_j(y) \right) d \sigma(y)=
$$

$$
\sum\limits_{j=1}^M \int\limits_{\Gamma_j}P(x,y) [g_{\e}(y)- \overline{g} ] \varphi_j(y) d \sigma(y),
$$
where $\Gamma_j=\Gamma \cap B_j$.

After a permutation of coordinates we may assume that there exists a constant $b>0$ and a smooth, real-valued function $\psi(z')$ defined for $|z'-z_0'|<b$, where $z'=(z_1, ... , z_{d-1})$ such that
$$
\Gamma_j=\{  (z', \psi(z')): \ |z' -z_0' |< 10  a \rho  \} \cap B_j,
$$
where $B_j$ is a ball of radius $2  a \rho$. We may also assume that $\supp \varphi_j \subset B_j$, $|D^{\alpha} \psi (z')|\leq C_{\alpha}$, and $ |det ( \frac{\partial^2 \psi }{\partial z_i \partial z_j}  )_{i,j=1}^{d-1} (z') |\geq c>0$ for $|z'-z_0'|< b$.  The last condition comes from the assumption of the surface being uniformly convex.

Letting $y^{(j)}\in \Gamma_j$ we have
$$
\int\limits_{\Gamma_j} P(x,y) [g_{\e}(y) -\overline{g}] \varphi_j (y) d \sigma (y)=\frac{1}{|x- y^{(j)} |^{d-1}} I_j,
$$
where
$$
I_j=\int\limits_{\Gamma_j} |x-y^{(j)}|^{d-1} P(x,y) [ g_{\e}(y)-\overline{g} ] \varphi_j (y) d \sigma(y).
$$
It follows that
$$
I_j=\int\limits_{|z'- z_0' |<10 a \rho} |x-y^{(j)}|^{d-1} P(x,(z', \psi(z'))) [ g_{\e}(z', \psi(z') )  -\overline{g}  ] \varphi_j (z', \psi(z'))(1+|\nabla \psi(z') |^2 )^{1/2} d z'.
$$
We now make a change of variable $y'=z'-z_0'$ and obtain
\begin{gather*}
I_j=\int\limits_{|y'|<10a \rho  } |x-y^{(j)}|^{d-1} P(x, (z_0' +y', \psi(z_0'+y') ) ) [ g_{\e}( z_0' + y', \psi(z_0' +y'))-\overline{g} ]  \\ \varphi_j (z_0' +y' ,\psi(z_0'+y')) (1+|\nabla \psi(z_0'+y') |^2)^{1/2} dy',
\end{gather*}
We then set $\psi_1(y')=\psi(z_0'+y' ) -s_0$ for $|y'|<b$, where $s_0=\psi(z_0')$. Hence $\psi_1(0)=0$ and
\begin{gather*}
I_j=\int\limits_{|y'|<10  a \rho  } |x-y^{(j)}|^{d-1} P(x, (z_0' +y', s_0+ \psi_1(y') ) ) [ g_{\e}( z_0' + y', s_0+ \psi_1(y'))-\overline{g} ]  \\ \varphi_j (z_0' +y' ,s_0+ \psi_1(y')) (1+|\nabla \psi_1(y') |^2)^{1/2} dy'.
\end{gather*}
We now set $y'= \rho z'$ and obtain
\begin{gather*}
I_j=\rho^{d-1} \int\limits_{|z'| < 10  a  } |x -y^{(j)} |^{d-1} P(x,  ( z_0' +\rho z' , s_0+ \psi_1 ( \rho z' )) ) [ g_{\e}(z_0'+ \rho z', s_0+ \psi_1(\rho z'))-\overline{g} ] \\  \varphi_j (z_0'+\rho z',s_0+ \psi_1( \rho z')) (1+|\nabla \psi_1(\rho z') |^2)^{1/2} d z'.
\end{gather*}
Set $a_0=10 a$. Since $g$ is 1-periodic and smooth we have $g(y)=\sum\limits_{m\in \Z^d} c_m e^{2\pi i m\cdot y}$. It follows that
\begin{gather*}
g_{\e}(z_0'+\rho z', s_0+\psi_1 (\rho z')  )= \sum\limits_{m \in \Z^d} c_m e^{2\pi i  [ \frac{m'}{\e}\cdot (z_0'+\rho z') + \frac{m_d}{\e}(s_0  + \psi_1 (\rho z'))  ] } =\\ \sum\limits_{m\in \Z^d} c_m e^{2\pi i [ \frac{m'}{\e}\cdot z_0' + \frac{m_d}{\e} s_0 ] } e^{2\pi i [ \frac{\rho }{\e} m' \cdot z' + \frac{m_d}{\e} \psi_1(\rho z') ]}
\end{gather*}
Hence
\begin{gather*}
I_j=\rho^{d-1} \sum\limits_{m\neq 0} c_m  e^{2\pi i [ \frac{m'}{\e} \cdot z_0' + \frac{m_d}{\e} s_0 ] }  \int\limits_{|z'|<a_0} |x-y^{(j)}|^{d-1} P(x, (z_0'+  \rho z', s_0+ \psi_1 (\rho z' )) ) \\ \varphi_j (z_0'+ \rho z', s_0+ \psi_1( \rho z'))  ( 1+|\nabla \psi_1 (\rho z')|^2 )^{1/2} e^{2\pi i (\frac{\rho }{\e} m' \cdot z'  +\frac{m_d}{\e} \psi_1(\rho z')   )} dz' = \\
\rho^{d-1} \sum\limits_{m\neq 0} c_m e^{2\pi i [\frac{m'}{\e}\cdot z_0' + \frac{m_d}{\e} s_0 ] }  \int\limits_{|z'|<a_0} e^{2\pi i (\frac{\rho }{\e} m' \cdot z'  +\frac{m_d}{\e} \psi_1(\rho z')   )} u(z') dz',
\end{gather*}
where
$$
u(z')= |x-y^{(j)}|^{d-1} P(x, ( z_0'+ \rho z', s_0+ \psi_1 (\rho z' )) ) \varphi_j (z_0'+ \rho z', s_0 + \psi_1( \rho z')) ( 1+|\nabla \psi_1 (\rho z')|^2 )^{1/2},
$$
and $m'=(m_1,...,m_{d-1})$. It follows from the condition $\rho \leq c_0 d(x)$ and Lemma $\ref{Lemma-Poisson-derivatives}$ that for each $\alpha\in \Z^d_{+}$ there exists a constant $C_{\alpha}$ such that $|D^{\alpha} u(z')|\leq C_{\alpha}$,  for $|z'|<a_0$.

Now by setting $\xi=\frac{\rho}{\e} m$, $\xi' =\frac{\rho}{\e} m'$, and $\xi_d= \frac{\rho}{\e} m_d$ we obtain
\begin{gather*}
I_j=\rho^{d-1} \sum\limits_{m\neq 0} c_m  e^{2\pi i [ \frac{m'}{\e} \cdot z_0' + \frac{m_d}{\e}s_0 ]  }    \int\limits_{|z'|<a_0} e^{2\pi i  ( \xi' \cdot z' + \xi_d \frac{1}{\rho} \psi_1(\rho z') ) } u(z') dz':= \\
\rho^{d-1}\sum\limits_{m\neq 0} c_m e^{2\pi i [ \frac{m'}{\e} \cdot z_0' + \frac{m_d}{\e}s_0 ]  } \mathcal{J}_j,
\end{gather*}
where $\mathcal{J}_j$ denotes the last integral in the sum above. From Lemma $\ref{Lemma-Integral-est}$ we get
$$
| \mathcal{I}_j | \leq C \rho^{d-1} \sum\limits_{m\neq 0} |c_m| (\rho |\xi|)^{-(d-1)/2}= C \e^{(d-1)/2} \sum\limits_{m\neq 0} |c_m| \frac{1}{||m||^{(d-1)/2}} \leq C \e^{(d-1)/2},
$$
where the last sum converges by virtue of Lemma \ref{Lemma-coeff-convergence}. We therefore get
$$
\left| \int\limits_{\Gamma_j}  P(x,y) [g_{\e}(y)-\overline{g} ] \varphi_j(y) d\sigma(y)    \right| \leq C \frac{1}{|x-y^{(j)}|^{d-1} } \e^{(d-1)/2}.
$$

Hence
$$
|u_{\e}(x) - u_0(x)  | \leq C \e^{(d-1)/2 } \sum\limits_{j=1}^M \frac{1}{|x-y^{(j)}|^{d-1} } \leq
$$

$$
C \e^{(d-1)/2 } \rho^{-(d-1)} \sum\limits_{j=1}^M \frac{|\Gamma_j|}{|x-y^{(j)}|^{d-1} } \leq C \e^{(d-1)/2 } \rho^{-(d-1)} \int\limits_{\Gamma} \frac{1}{|x-y|^{d-1}} d \sigma(y) \leq
$$

$$
C \e^{(d-1)/2 } \rho^{-(d-1)} \int\limits_{\Gamma} \frac{1}{|x-y|^{d-1}} \frac{|x-y|^{\delta}}{d(x)^{\delta}}  d \sigma(y) \leq C \e^{(d-1)/2 } \rho^{-(d-1)} \frac{1}{d(x)^{\delta}},
$$
where $\delta>0$ is a small number. Here we used the fact, noted in the beginning of the proof, that for any $y\in \Gamma$ the number of $j$ for which $y\in \Gamma_j$ is bounded by $11^d$.

Now we take $\rho =c_0 d(x)$ where $c_0>0$ is a small constant, and get
$$
|u_{\e}(x) -u_0(x) |\leq C \e^{(d-1)/2}  \frac{1}{d(x)^{d-1+\delta}}, \qquad x\in D,
$$
where $\delta>0$ is arbitrarily small.

Theorem \ref{Thm-pointwise estimate} is proved.

$\newline$
\textbf{Proof of Theorem \ref{Thm-L_p estimate}.} First we consider the case $p=1$. Using Theorem $\ref{Thm-pointwise estimate}$ we get
$$
||u_{\e}-u_0||_{L^1(D)} \leq C \int\limits_{0}^{\e^{1/2}} 1 dt + C \int\limits_{\e^{1/2}}^1 \e^{(d-1)/2} t^{1-d-\delta } dt \leq
$$
\begin{equation}\label{est-p=1}
C \e^{1/2} + C  \e^{(d-1)/2} \e^{1/2(2-d-\delta)} =C \e^{1/2-\delta/2}.
\end{equation}
Now assume $1<p<\infty$. From $(\ref{est-p=1})$ and using the boundedness of $u_{\e}$ and $u_0$ we get
$$
||u_{\e}-u_0||_{L^p(D)} = \left(  \int\limits_{D} |u_{\e}(x)-u_0(x)| |u_{\e}(x)-u_0(x)|^{p-1} dx   \right)^{1/p} \leq
$$

$$
 C \left(  \int\limits_{D} |u_{\e}(x)-u_0(x)|  dx   \right)^{1/p}   \leq C  (\e^{1/2-\delta/2})^{1/p} =
$$

$$
C \e^{1/2p- \delta/2p}.
$$

Theorem \ref{Thm-L_p estimate} is proved. $\square$

\bigskip
\footnotesize
\noindent\textit{Acknowledgments.}
H. Aleksanyan thanks G\"{o}ran Gustafsson foundation for visiting appointment to KTH. H. Shahgholian was partially supported by Swedish Research Council.

\end{document}